\newcounter{q3}
\newcommand{\dsm}[3]{
{\if#20{\if#31{\frac{\partial #1}{\partial y}}\else
          {\frac{\partial^{#3} #1}{\partial y^{#3}}}
        \fi}\else
  {\if#30{\if#21{\frac{\partial #1}{\partial x}}\else
            {\frac{\partial^{#2} #1}{\partial x^{#2}}}
          \fi}\else
    {\setcounter{q3}{#2}\addtocounter{q3}{#3}
    \frac{\partial{\if{1}\arabic{q3}^{}\else^{ \arabic{q3} }\fi}#1}
    {{\if#20\else{\partial x{\if#21\else{^{#2}}\fi}}\fi}
     {\if#30\else{\partial y\if#31\else{^{#3}}\fi}\fi} }}
   \fi}
\fi} }
\newcommand{\dzm}[2]{
{\if#10{\if#21{\frac{\partial}{\partial\overline{\zeta}}}\else
          {\frac{\partial^{#2}}{\partial\overline{\zeta}^{#2}}}
        \fi}\else
  {\if#20{\if#11{\frac{\partial}{\partial\zeta}}\else
            {\frac{\partial^{#1}}{\partial\zeta^{#1}}}
          \fi}\else
    {\setcounter{q3}{#1}\addtocounter{q3}{#2}
    \frac{\partial{\if{1}\arabic{q3}^{}\else^{ \arabic{q3} }\fi}}
    {{\if#10\else{\partial\zeta{\if#21\else{^{#1}}\fi}}\fi}
     {\if#20\else{\partial\overline{\zeta}\if#21\else{^{#2}}\fi}\fi} }}
   \fi}
\fi} }
\newcounter{q2}
\newtheoremstyle{theor}
  {\medskipamount}
  {\medskipamount}
  {\itshape}
  {}
  {\bfseries}
  {.}
  {.5em}
  {}
\newtheorem{definition}{Definition}[section]
\newtheorem{theorem}[definition]{Theorem}
\newtheorem{lemma}[definition]{Lemma}
\newtheorem{proposition}[definition]{Proposition}
\newtheorem{corollary}[definition]{Corollary}
\theoremstyle{definition}
\newtheorem{remark}[definition]{Remark}
\newtheorem{example}[definition]{Example}
\numberwithin{equation}{section}
\newtheoremstyle{remarks}
  {0mm}
  {0mm}
  {\itshape}
  {}
  {\itshape}
  {.}
  {.5em}
  {}
\makeatletter \@addtoreset{equation}{section} \makeatother
\begin{document}

\subsection*{\center BOUNDEDNESS OF PRETANGENT SPACES TO GENERAL METRIC SPACES}\begin{center}\textbf{V. Bilet and O. Dovgoshey } \end{center}
\parshape=5
1cm 13.5cm 1cm 13.5cm 1cm 13.5cm 1cm 13.5cm 1cm 13.5cm \noindent \small {\bf Abstract.}
  Let $(X,d,p)$ be a metric space with a metric $d$ and a marked point $p.$ We define the set of $w$-strongly porous at 0 subsets of $[0,\infty)$ and prove that the distance set $\{d(x,p): x\in X\}$ is $w$-strongly porous at 0 if and only if every pretangent space to $X$ at $p$ is bounded.

\medskip

\parshape=2
1cm 13.5cm 1cm 13.5cm  \noindent \small {\bf Key words:} metric spaces, infinitesimal boundedness in metric spaces, distance set, local strong porosity.

 \bigskip
\textbf{ AMS 2010 Subject Classification: 54E35, 28A10.}


\large\section {Introduction } \hspace*{\parindent} Recent achievements in the metric space theory are closely related to some generalizations of the differentiation. A possible but not the only one initial point to develop the theory of a differentiation in metric spaces is the fact that every separable metric space admits an isometric embedding into the dual space of a separable Banach space. It provides a linear structure, and so a differentiation. This approach leads to a rather complete theory of rectifiable sets and currents on metric spaces \cite{AK1, AK2}. The concept of the upper gradient \cite{H1, H2, Sh}, Cheeger's notion of differentiability for Rademacher's theorem in certain metric measure spaces \cite{Ch}, the metric derivative in the studies of metric space valued functions of bounded variation \cite{A1, A2} and the Lipshitz type approach in \cite{H} are important examples of such generalizations. The generalizations of the differentiability mentioned above give usually nontrivial results only for the assumption that metric spaces have ``sufficiently many'' rectifiable curves.

A new intrinsic notion of differentiability for the mapping between the general metric spaces was produced  in \cite{MD} (see
also \cite{DM}). A basic technical tool in \cite{MD} is a pretangent and tangent spaces to an arbitrary metric space $X$ at a point $p.$ The development of this theory requires the understanding of interrelations between the infinitesimal properties of initial metric space and geometry of pretangent spaces to this initial. The main purpose of the present paper is to search the conditions under which all pretangent spaces to $X$ at a point $p\in X$ are bounded.

For convenience we recall some terminology and results related to pretangent spaces to
general metric spaces.

Let $(X,d,p)$ be a pointed metric space with a metric $d$ and a marked point $p.$ Fix a
sequence $\tilde{r}$ of positive real numbers $r_n$ tending to zero. In what follows
$\tilde{r}$ will be called a \emph{normalizing sequence}. Let us denote by $\tilde{X}$
the set of all sequences of points from X and by $\mathbb N$ the set of positive integer (~=~natural) numbers.
\begin{definition}\label{D1.1} Two sequences $\tilde{x}=\{x_n\}_{n\in \mathbb N}$ and $\tilde{y}=\{y_n\}_{n\in \mathbb
N},$ $\tilde{x}, \tilde{y} \in \tilde{X}$ are mutually stable with respect to
$\tilde{r}=\{r_n\}_{n\in \mathbb N}$ if there is a finite limit
\begin{equation}\label{eq1.2}
\lim_{n\to\infty}\frac{d(x_n,y_n)}{r_n}:=\tilde{d}_{\tilde{r}}(\tilde{x},\tilde{y})=\tilde{d}(\tilde{x},\tilde{y}).\end{equation}\end{definition}
We shall say that a family $\tilde{F}\subseteq\tilde{X}$ is \emph{self-stable} (w.r.t.
$\tilde{r}$) if every two $\tilde{x}, \tilde{y} \in \tilde{F}$ are mutually stable. A
family $\tilde{F}\subseteq\tilde{X}$ is \emph{maximal self-stable} if $\tilde{F}$ is
self-stable and for an arbitrary $\tilde{z}\in \tilde{X}$ either $\tilde{z}\in\tilde{F}$
or there is $\tilde{x}\in\tilde{F}$ such that $\tilde{x}$ and $\tilde{z}$ are not
mutually stable.

The standart application of Zorn's lemma leads to the following
\begin{proposition}\label{Pr1.2}Let $(X,d,p)$ be a pointed metric space. Then for every normalizing sequence $\tilde{r}=\{r_n\}_{n\in \mathbb N}$ there exists a maximal self-stable family $\tilde{X}_{p,\tilde{r}}$ such that $\tilde{p}:=\{p,p,...\}\in\tilde{X}_{p,\tilde{r}}.$
\end{proposition}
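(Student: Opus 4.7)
The plan is the standard Zorn's lemma argument, with the minor twist of ensuring that the constant sequence $\tilde p$ lies in the maximal family. Let $\mathcal{P}_{p,\tilde r}$ denote the collection of all self-stable (w.r.t.\ $\tilde r$) families $\tilde F\subseteq\tilde X$ containing $\tilde p=\{p,p,\dots\}$, partially ordered by inclusion. My goal is to apply Zorn's lemma to $\mathcal{P}_{p,\tilde r}$ and then to verify that a maximal element produced by Zorn coincides with a maximal self-stable family in the sense of the definition immediately preceding the proposition.

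First I would observe that $\mathcal{P}_{p,\tilde r}$ is nonempty: the singleton $\{\tilde p\}$ is trivially self-stable, since for $\tilde p$ and itself the limit in \eqref{eq1.2} equals $\lim_{n\to\infty}d(p,p)/r_n=0$. Next I would verify the chain condition. Let $\mathcal{C}\subseteq\mathcal{P}_{p,\tilde r}$ be a chain and set $\tilde G:=\bigcup_{\tilde F\in\mathcal{C}}\tilde F$. Given any two sequences $\tilde x,\tilde y\in\tilde G$, totality of $\mathcal{C}$ provides a single $\tilde F\in\mathcal{C}$ with $\tilde x,\tilde y\in\tilde F$; since $\tilde F$ is self-stable, the limit \eqref{eq1.2} exists for this pair. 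Hence $\tilde G$ is self-stable, clearly contains $\tilde p$, and is an upper bound for $\mathcal{C}$ in $\mathcal{P}_{p,\tilde r}$. Zorn's lemma therefore supplies a maximal element $\tilde{X}_{p,\tilde r}\in\mathcal{P}_{p,\tilde r}$.

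Finally I would confirm the maximality condition in the sense of the paper. Suppose $\tilde z\in\tilde X$ and $\tilde z\notin\tilde{X}_{p,\tilde r}$. The family $\tilde{X}_{p,\tilde r}\cup\{\tilde z\}$ still contains $\tilde p$ and strictly extends $\tilde{X}_{p,\tilde r}$, so by maximality it fails to be self-stable. Since $\tilde z$ is mutually stable with itself, the obstruction must come from some $\tilde x\in\tilde{X}_{p,\tilde r}$ such that $\tilde x$ and $\tilde z$ are not mutually stable, which is exactly the property required by the definition of a maximal self-stable family.

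There is no real obstacle here; the only minor point demanding care is the last paragraph, namely the translation between Zorn-maximality in $\mathcal{P}_{p,\tilde r}$ and the ``for every $\tilde z\in\tilde X$'' formulation given in the text, which is settled by noting that $\tilde z$ is always mutually stable with itself and so the failure of self-stability of $\tilde{X}_{p,\tilde r}\cup\{\tilde z\}$ localizes to a pair $(\tilde x,\tilde z)$ with $\tilde x\in\tilde{X}_{p,\tilde r}$.
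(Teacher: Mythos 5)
Your proof is correct and is exactly the ``standard application of Zorn's lemma'' that the paper invokes without writing out: the poset of self-stable families containing $\tilde p$, the union of a chain as an upper bound, and the translation of Zorn-maximality into the paper's definition via the observation that the failure of self-stability of $\tilde X_{p,\tilde r}\cup\{\tilde z\}$ must localize to a pair $(\tilde x,\tilde z)$ with $\tilde x\in\tilde X_{p,\tilde r}$. Nothing is missing.
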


Note that the condition $\tilde{p}\in\tilde{X}_{p,\tilde{r}}$ implies the equality
\begin{equation*}\label{eq1.3}\lim_{n\to\infty}d(x_n,p)=0 \end{equation*} for every $\tilde{x}=\{x_n\}_{n\in \mathbb N}\in\tilde{X}_{p,\tilde{r}}.$

Consider a function $\tilde{d}:\tilde{X}_{p,\tilde{r}}\times\tilde{X}_{p,\tilde{r}}\rightarrow\mathbb R$ where
$\tilde{d}(\tilde{x},\tilde{y})=\tilde{d}_{\tilde{r}}(\tilde{x},\tilde{y})$ is defined
by \eqref{eq1.2}. Obviously, $\tilde{d}$ is symmetric and nonnegative. Moreover, the
triangle inequality for $d$ implies
$$\tilde{d}(\tilde{x},\tilde{y})\leq\tilde{d}(\tilde{x},\tilde{z})+\tilde{d}(\tilde{z},\tilde{y})$$
for all $\tilde{x},\tilde{y},\tilde{z}\in\tilde{X}_{p,\tilde{r}}.$ Hence $(\tilde{X}_{p,\tilde{r}},\tilde{d})$
is a pseudometric space.
\begin{definition}\label{D1.3} The pretangent space to the space X (at the point p w.r.t. $\tilde{r}$) is the metric identification of the pseudometric space
$(\tilde{X}_{p,\tilde{r}},\tilde{d}).$\end{definition}

Since the notion of pretangent space is important for the paper, we remind this metric
identification construction.

Define the relation $\sim$ on $\tilde X_{p, \tilde r}$ by $\tilde x\sim \tilde y$ if and only if
$\tilde d(\tilde x, \tilde y)=0.$ Then $\sim$ is an equivalence relation. Let us denote
by $\Omega_{p,\tilde r}^{X}$ the set of equivalence classes in $\tilde{X}_{p,\tilde{r}}$ under the
equivalence relation $\sim.$ It follows from general properties of pseudometric spaces
(see, for example, \cite{Kelley}), that if $\rho$ is defined on $\Omega_{p,\tilde
r}^{X}$ by \begin{equation*} \label{eq1.4}\rho(\alpha,\beta):=\tilde d (\tilde x, \tilde
y)\end{equation*}for $\tilde x\in \alpha$ and $\tilde y\in \beta,$ then $\rho$ is a
well-defined metric on $\Omega_{p,\tilde r}^{X}.$ By definition, the metric
identification of $(\tilde{X}_{p,\tilde{r}}, \tilde d)$ is the metric space $(\Omega_{p,\tilde
r}^{X}, \rho).$

It should be observed that $\Omega_{p,\tilde r}^{X}\ne \varnothing$ because the constant
sequence $\tilde p$ belongs to $\tilde{X}_{p,\tilde{r}}.$ Thus every pretangent space
$\Omega_{p, \tilde r}^{X}$ is a pointed metric space with natural distinguished point
$\pi (\tilde p),$ (see diagram~\eqref{eq1.5} below).

Let $\{n_k\}_{k\in\mathbb N}$ be an infinite strictly increasing sequence of natural
numbers. Let us denote by $\tilde r'$ the subsequence $\{r_{n_k}\}_{k\in \mathbb N}$ of
the normalizing sequence $\tilde r=\{r_n\}_{n\in\mathbb N}$ and let $\tilde
x':=\{x_{n_k}\}_{k\in\mathbb N}$ for every $\tilde x=\{x_n\}_{n\in\mathbb N}\in\tilde
X.$ It is clear that if $\tilde x$ and $\tilde y$ are mutually stable w.r.t. $\tilde r,$
then $\tilde x'$ and $\tilde y'$ are mutually stable w.r.t. $\tilde r'$ and
\begin{equation}\label{eqv}\tilde d_{\tilde r}(\tilde x, \tilde y)=\tilde d_{\tilde r'}(\tilde x', \tilde
y').\end{equation} If $\tilde X_{p,\tilde r}$ is a maximal self-stable (w.r.t. $\tilde
r$) family, then, by Zorn's Lemma, there exists a maximal self-stable (w.r.t. $\tilde
r'$) family $\tilde X_{p,\tilde r'}$ such that $$\{\tilde x':\tilde x \in \tilde
X_{p,\tilde r}\}\subseteq \tilde X_{p,\tilde r'}.$$ Denote by $in_{\tilde r'}$ the map
from $\tilde X_{p,\tilde r}$ to $\tilde X_{p,\tilde r'}$ with $in_{\tilde r'}(\tilde
x)=\tilde x'$ for all $\tilde x\in\tilde X_{p,\tilde r}.$ It follows from \eqref{eqv}
that after metric identifications $in_{\tilde r'}$ passes to an isometric embedding
$em':\Omega_{p,\tilde r}^{X}~\rightarrow~\Omega_{p,\tilde r'}^{X}$ under which the
diagram
\begin{equation} \label{eq1.5}
\begin{array}{ccc}
\tilde X_{p, \tilde r} & \xrightarrow{\ \ \mbox{\emph{in}}_{\tilde r'}\ \ } &
\tilde X_{p, \tilde r^{\prime}} \\
\!\! \!\! \!\! \!\! \! \pi\Bigg\downarrow &  & \! \!\Bigg\downarrow \pi^{\prime}
\\
\Omega_{p, \tilde r}^{X} & \xrightarrow{\ \ \mbox{\emph{em}}'\ \ \ } & \Omega_{p, \tilde
r^{\prime}}^{X}
\end{array}
\end{equation}is commutative. Here $\pi$ and
$\pi'$ are the natural projections, $\pi(\tilde x):=\{\tilde y \in \tilde X_{p,\tilde
r}: \tilde d_{\tilde r}(\tilde x, \tilde y)=0\}$ and $\pi'(\tilde x):=\{\tilde y \in
\tilde X_{p,\tilde r'}: \tilde d_{\tilde r'}(\tilde x, \tilde y)=0\}.$

Let $X$ and $Y$ be metric spaces. Recall that a map $f:X\rightarrow Y$ is called an
\emph{isometry} if $f$ is distance-preserving and onto.

\begin{definition}\label{D1.4}A pretangent $\Omega_{p,\tilde
r}^{X}$ is tangent if $em':\Omega_{p,\tilde r}^{X}\rightarrow \Omega_{p,\tilde r'}^{X}$
is an isometry for every~$\Omega_{p, \tilde r'}^{X}.$\end{definition}

The following lemma is a direct corollary of Lemma 5 from \cite{DAK}.

\begin{lemma}\label{Lem1.6}
Let $(X,d,p)$ be a pointed metric space, $\mathbf{\mathfrak{B}}$ a countable subfamily
of $\tilde X$ and let $\tilde r=\{r_n\}_{n\in\mathbb N}$ be a normilizing sequence.
Suppose that $\tilde b$ and $\tilde p$ are mutually stable for every $\tilde
b=\{b_n\}_{n\in\mathbb N}\in\mathbf{\mathfrak{B}}.$ Then there is an infinite
subsequence $\tilde r'=\{r_{n_k}\}_{k\in\mathbb N}$ of $\tilde r$ such that the family
$$\mathbf{\mathfrak{B'}}:=\{\tilde b'=\{b_{n_k}\}_{k\in\mathbb N}: \tilde b \in
\mathbf{\mathfrak{B}}\}$$ is self-stable w.r.t. $\tilde r'.$
\end{lemma}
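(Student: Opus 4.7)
The plan is to produce the required subsequence by a Cantor diagonal extraction indexed by the countable set of pairs drawn from $\mathbf{\mathfrak{B}}$. The role of the hypothesis (every $\tilde b\in\mathbf{\mathfrak{B}}$ is mutually stable with $\tilde p$) will be to furnish the boundedness needed in order to apply the Bolzano--Weierstrass theorem at each step of the extraction; without such boundedness one could not guarantee that a convergent subsequence of $d(b_n^{(i)},b_n^{(j)})/r_n$ exists at all, let alone one with a \emph{finite} limit as required by Definition~\ref{D1.1}.

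First I would fix enumerations $\mathbf{\mathfrak{B}}=\{\tilde b^{(i)}\}_{i\in\mathbb N}$ with $\tilde b^{(i)}=\{b_n^{(i)}\}_{n\in\mathbb N}$, and $\{(i,j): i\le j,\ i,j\in\mathbb N\}=\{(i_k,j_k)\}_{k\in\mathbb N}$. For every such pair $(i,j)$ the hypothesis delivers finite limits $c_i:=\lim_{n\to\infty} d(b_n^{(i)},p)/r_n$ and $c_j:=\lim_{n\to\infty} d(b_n^{(j)},p)/r_n$, so the triangle inequality
\[
\frac{d(b_n^{(i)},b_n^{(j)})}{r_n}\;\le\;\frac{d(b_n^{(i)},p)}{r_n}+\frac{d(p,b_n^{(j)})}{r_n}
\]
shows that the real sequence $\{d(b_n^{(i)},b_n^{(j)})/r_n\}_{n\in\mathbb N}$ is bounded (eventually by, say, $c_i+c_j+1$).

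Next comes the diagonal extraction. Set $N_0:=\mathbb N$, and having selected an infinite $N_{k-1}\subseteq\mathbb N$, use Bolzano--Weierstrass to extract an infinite $N_k\subseteq N_{k-1}$ along which $d(b_n^{(i_k)},b_n^{(j_k)})/r_n$ converges to a finite limit. Let $n_k$ be the $k$-th element of $N_k$ in its natural order; then $\{n_k\}_{k\in\mathbb N}$ is strictly increasing, and for each fixed pair $(i,j)=(i_\ell,j_\ell)$ the tail $\{n_k\}_{k\ge\ell}$ is contained in $N_\ell$, whence $\lim_{k\to\infty} d(b_{n_k}^{(i)},b_{n_k}^{(j)})/r_{n_k}$ exists and is finite. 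By Definition~\ref{D1.1} this is exactly the statement that the sequences $\tilde b^{(i)'}:=\{b_{n_k}^{(i)}\}_{k\in\mathbb N}$ and $\tilde b^{(j)'}$ are mutually stable with respect to $\tilde r':=\{r_{n_k}\}_{k\in\mathbb N}$, so $\mathbf{\mathfrak{B}}'$ is self-stable w.r.t. $\tilde r'$.

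I do not anticipate any substantive obstacle here: the argument reduces to the textbook diagonal trick, and the only genuinely nontrivial ingredient is the observation that mutual stability of each $\tilde b$ with $\tilde p$ is precisely what supplies the a priori boundedness of $d(b_n^{(i)},b_n^{(j)})/r_n$ needed to run Bolzano--Weierstrass at every step.
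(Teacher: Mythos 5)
Your argument is correct and complete: the triangle inequality together with the assumed mutual stability of each $\tilde b$ with $\tilde p$ does give the boundedness of $d(b_n^{(i)},b_n^{(j)})/r_n$, and the nested Bolzano--Weierstrass extraction followed by taking the $k$-th element of $N_k$ is the standard diagonal argument that makes every pair converge along the resulting subsequence. The paper itself offers no proof, deferring to Lemma~5 of \cite{DAK}, and your write-up is exactly the self-contained proof one would expect there; no gaps.
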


\section {Boundedness of pretangent spaces and local strong right porosity}
\hspace*{\parindent} Let us recall the definition of the right porosity. This
definition and an useful collection of facts related to the notion of porosity can be
found in \cite{Th}. Let $E$ be a subset of $\mathbb R^{+}=[0,\infty).$
\begin{definition}\label{D1}
The local right porosity of $E$ at 0 is the quantity
\begin{equation*}\label{L1}
p^{+}(E,0):=\limsup_{h\to 0^{+}}\frac{\lambda(E,0,h)}{h}
\end{equation*}
where $\lambda(E,0,h)$ is the length of the largest open subinterval of $(0,h)$ that
contains no point of $E$. The set $E$ is strongly porous on the right at 0 if $p^{+}(E,0)=1.$
\end{definition}

It was proved in \cite{DAK} that a bounded tangent space to $X$ at $p$
exists if and only if the distance set
\begin{equation*}\label{Sp}
S_{p}(X):=\{d(x,p): x\in X\}
\end{equation*} is strongly porous on the right at 0.

$\bullet$ It is therefore reasonable to ask for which pointed metric spaces $(X,d,p)$ all pretangent spaces $\Omega_{p,\tilde r}^{X}$ are bounded?

$\bullet$ Is there a modification of the local strong porosity describing the boundedness of all pretangent spaces $\Omega_{p,\tilde r}^{X}?$

Our first goal is to introduce a desired modification of porosity.

Let $\tilde \tau=\{\tau_n\}_{n\in\mathbb N}$ be a sequence of real numbers. We shall say
that $\tilde \tau$ is \emph{almost decreasing} if the inequality $\tau_{n+1}\le\tau_{n}$ holds
for sufficiently large $n.$ Write $\tilde E_{0}^{d}$ for the set of almost decreasing
sequences $\tilde \tau$ with $\mathop{\lim}\limits_{n\to\infty}\tau_{n}=0$ and having
$\tau_{n}\in E\setminus \{0\}$ for $n\in\mathbb N.$

Define $\tilde I_{E}$ to be the set of sequences $\{(a_n, b_n)\}_{n\in\mathbb
N}$ of open intervals $(a_n,b_n)\subseteq\mathbb R^{+}$ meeting the following conditions:

\bigskip
$\bullet$ \emph{each $(a_n, b_n)$ is a connected component of the set $Ext E=Int(\mathbb
R^{+}\setminus E),$ i.e., $(a_n,b_n)\cap E=\varnothing$ but for every
$(a,b)\supseteq(a_n,b_n)$ we have $$((a,b)\ne (a_n, b_n))\Rightarrow((a,b)\cap E \ne
\varnothing).$$}

 \emph{$\bullet$
$\mathop{\lim}\limits_{n\to\infty}a_{n}=0$ and
$\mathop{\lim}\limits_{n\to\infty}\frac{b_n-a_n}{b_n}=1.$}

Define also the weak equivalence $\asymp$ on the set of sequences of strictly positive
numbers as follows. Let $\tilde a=\{a_n\}_{n\in\mathbb N}$ and
$\tilde{\gamma}=\{\gamma_n\}_{n\in\mathbb N}.$ Then $\tilde a \asymp \tilde {\gamma}$ if
there are  constants $c_1, c_2
>0$ such that
\begin{equation*}\label{equiv1}
c_1 a_n < \gamma_n < c_2 a_n, \, n\in\mathbb N.
\end{equation*}

\begin{definition}\label{D2*}
Let 0 be an accumulation point of a set $E\subseteq\mathbb R^{+}$ and let $\tilde \tau
\in \tilde E_{0}^{d}.$ The set $E$ is $\tilde \tau$-strongly porous at 0 if there is a
sequence $\{(a_n, b_n)\}_{n\in\mathbb N}\in\tilde I_{E}$ such that
\begin{equation*}\label{equiv2}
\tilde\gamma \asymp \tilde a
\end{equation*}
where $\tilde a=\{a_n\}_{n\in\mathbb N}.$
\end{definition}

Let $E$ be a subset of $\mathbb R^{+}$ and let $0\in E.$
\begin{definition}\label{D2.3}
The set $E$ is $w$-strongly porous at 0 if for every sequence $\tilde
\tau=\{\tau_n\}_{n\in\mathbb N}\in\tilde E_{0}^{d}$ there is a subsequence $\tilde
\tau^{'}~=~\{\tau_{n_{k}}\}_{k\in\mathbb N}~\in~\tilde E_{0}^{d}$ for which the set $E$ is
$\tilde \tau^{'}$-strongly porous at~0.
\end{definition}
\begin{remark}\label{r2.4}
It is clear that $E\subseteq\mathbb R^{+}$ is $w$-strongly porous at 0 if 0 is an isolated point of $E$ and, on the other hand, if $E$ is $w$-strongly porous at 0 then $E$ is strongly porous at 0.
\end{remark}

The following theorem gives a boundedness criterion for pretangent spaces.
\begin{theorem}\label{Th2.4}
Let $(X, d, p)$ be a pointed metric space.
All pretangent spaces to $X$ at $p$ are bounded if and only if the set $S_{p}(X)$ is
$w$-strongly porous at $0.$
\end{theorem}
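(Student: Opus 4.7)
The plan is to prove both implications by contraposition.

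For \textbf{necessity}, assume that some pretangent space $\Omega_{p,\tilde r}^X$ is unbounded and choose $\tilde x \in \tilde X_{p,\tilde r}$ with $c := \tilde d_{\tilde r}(\tilde x, \tilde p) > 0$; set $\tau_n := d(x_n,p)$. After passing to a subsequence of $\tilde r$---which preserves unboundedness thanks to the isometric embedding in diagram~\eqref{eq1.5}---and relabelling, $\tilde \tau$ is strictly decreasing and lies in $\tilde E_0^d$ with $E := S_p(X)$. If $E$ were $w$-strongly porous, applying Definition~\ref{D2.3} yields a subsequence $\tilde \tau' = (\tau_{n_j})$ and $(a_j,b_j) \in \tilde I_E$ with constants $C_1, C_2$ satisfying $C_1 a_j < \tau_{n_j} < C_2 a_j$. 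Since $\tau_{n_j} \in E$, $(a_j,b_j) \cap E = \varnothing$, and $b_j/a_j \to \infty$, we must have $\tau_{n_j} \le a_j$; after one more subsequence, $a_j/r_{n_j} \to L$ for some $L \in [c, c/C_1]$ while $b_j/r_{n_j} \to \infty$. Use unboundedness again to pick $\tilde y \in \tilde X_{p,\tilde r}$ with $c'' := \tilde d_{\tilde r}(\tilde y, \tilde p) > c/C_1 \ge L$. Then $d(y_{n_j},p)/r_{n_j} \to c'' > L$ and $d(y_{n_j},p)/r_{n_j}$ stays bounded while $b_j/r_{n_j} \to \infty$, forcing $a_j < d(y_{n_j},p) < b_j$ for $j$ large---contradicting $d(y_{n_j},p) \in E$ and $(a_j,b_j) \cap E = \varnothing$.

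For \textbf{sufficiency}, assume $E = S_p(X)$ is not $w$-strongly porous, witnessed by $\tilde \tau \in \tilde E_0^d$ with $\tau_n = d(u_n,p)$, and set $\tilde r = \tilde \tau$. The key observation is an $\asymp$-transfer property: for any sequence $\tilde \sigma = (\sigma_n)$ of positive reals with $\tilde \sigma \asymp \tilde \tau$, a match $\tilde \sigma' \asymp \tilde a$ with $(a_j, b_j) \in \tilde I_E$ would, by transitivity of $\asymp$, also match a corresponding subsequence of $\tilde \tau$, contradicting the failure for $\tilde \tau$. For each $N \in \mathbb N$, define $\sigma_n^{(N)} := \sup(E \cap [0, N\tau_n])$ and $\beta_n^{(N)} := \inf(E \cap (N\tau_n, \infty))$ (well defined for $n$ large). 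Then $\sigma_n^{(N)}/\tau_n \in [1, N]$, so $\tilde \sigma^{(N)} \asymp \tilde \tau$; applied to the gap sequence $(\sigma_n^{(N)}, \beta_n^{(N)})$, the $\asymp$-transfer forces $\beta_n^{(N)}/\sigma_n^{(N)}$ to be bounded by some $M(N)$, whence $\beta_n^{(N)}/\tau_n \in [N, M(N)N]$. Passing to a subsequence we obtain $\beta_n^{(N)}/\tau_n \to L_N \in [N, M(N)N]$. Writing $\beta_n^{(N)} = d(v_n^{(N)}, p)$ for some $v_n^{(N)} \in X$, a standard diagonalization combined with Lemma~\ref{Lem1.6} produces a single subsequence $\tilde r^{**}$ of $\tilde \tau$ along which $\{\tilde v^{(N)}\}_{N\in\mathbb N} \cup \{\tilde p\}$ is self-stable; Zorn extends this to a maximal self-stable family, and the resulting pretangent $\Omega_{p,\tilde r^{**}}^X$ contains $\pi(\tilde v^{(N)})$ at distance $L_N \ge N$ from $\pi(\tilde p)$ for every $N$, so is unbounded.

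The \textbf{main obstacle} is the sufficiency direction: unpacking the nested-quantifier failure of $w$-strong porosity via the $\asymp$-transfer to obtain the uniform density ``$E$ has elements close to $N\tau_n$ for every $N$'', while ensuring that the subsequences and constants $M(N)$ involved can be assembled coherently via Lemma~\ref{Lem1.6} so as to produce a \emph{single} unbounded pretangent space rather than a different one for each $N$.
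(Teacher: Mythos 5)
Your proposal is correct and follows essentially the same strategy as the paper: in one direction the gap intervals supplied by $\tilde\tau'$-strong porosity cap every distance in a maximal self-stable family (the paper packages this as Lemma~\ref{l2.6}), and in the other the failure of porosity along every subsequence of $\tilde\tau$ yields, for each $N$, points of $S_p(X)$ between $N\tau_n$ and $M(N)N\tau_n$, which are assembled via diagonalization, Lemma~\ref{Lem1.6} and Zorn's lemma into a single unbounded pretangent space (the paper extracts this quantitative fact from Proposition~\ref{P2.7}, which you in effect re-derive inline). The only imprecision is writing $\beta_n^{(N)}=d(v_n^{(N)},p)$, since the infimum need not be attained in $S_p(X)$; one should instead choose $v_n^{(N)}$ with $\beta_n^{(N)}\le d(v_n^{(N)},p)\le (1+1/n)\beta_n^{(N)}$, which changes none of the limits.
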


The proof of Theorem~\ref{Th2.4} is based on several auxiliary results. In the following proposition we consider the distance set $S_{p}(X)$ as a pointed metric space with the standard metric induced from $\mathbb R$ and the marked point 0.

\begin{proposition}\label{P2.5}
Let $(X,d,p)$ be a pointed metric space with the distance set $S_{p}(X).$ The following statements are equivalent.
\item[\rm(i)]\textit{All pretangent spaces to $X$ at $p$ are bounded.}
\item[\rm(ii)]\textit{All pretangent spaces to $S_{p}(X)$ at 0 are bounded.}
\end{proposition}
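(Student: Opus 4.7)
The argument hinges on the map $\tilde{x} = \{x_n\} \mapsto \tilde{s} := \{d(x_n, p)\}$, which sends any sequence in $\tilde{X}$ mutually stable with $\tilde{p}$ to a sequence in $S_p(X)$ that is mutually stable with the constant zero sequence $\tilde{0}$, with one and the same value $\lim_n d(x_n, p)/r_n$ representing both $\tilde{d}_{\tilde{r}}(\tilde{x}, \tilde{p})$ and $\tilde{d}_{\tilde{r}}(\tilde{s}, \tilde{0})$. By the very definition of $S_p(X)$, every element is of the form $d(x, p)$, so this map is surjective at the level of individual sequences. Since a pointed pretangent space is bounded if and only if the distances from its marked point form a bounded set (by the triangle inequality), what needs to be transferred between the two settings is precisely the supremum of these limits over an appropriate maximal self-stable family.

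For (i)~$\Rightarrow$~(ii), I would assume some $\Omega_{0, \tilde{r}}^{S_p(X)}$ is unbounded, pick representatives $\tilde{s}_k = \{s^{(k)}_n\}$ in the underlying maximal self-stable family with $\lim_n s^{(k)}_n/r_n \to \infty$ as $k\to\infty$, and choose $x^{(k)}_n \in X$ with $d(x^{(k)}_n, p) = s^{(k)}_n$. The family $\{\tilde{x}_k\} \cup \{\tilde{p}\}$ need not be self-stable in $\tilde{X}$, but each $\tilde{x}_k$ is mutually stable with $\tilde{p}$ because $d(x^{(k)}_n, p)/r_n$ converges. Lemma~\ref{Lem1.6} then produces a subsequence $\tilde{r}'$ of $\tilde{r}$ making the primed family self-stable w.r.t.~$\tilde{r}'$; extending by Zorn's lemma to a maximal self-stable family $\tilde{X}_{p, \tilde{r}'}$, I obtain a pretangent space $\Omega_{p, \tilde{r}'}^X$ in which $\tilde{d}_{\tilde{r}'}(\tilde{x}_k', \tilde{p}) = \lim_n s^{(k)}_n/r_n$ is still unbounded in $k$. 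The converse (ii)~$\Rightarrow$~(i) is strictly parallel: from an unbounded $\Omega_{p, \tilde{r}}^X$, I select $\tilde{x}_k$ with $\tilde{d}_{\tilde{r}}(\tilde{x}_k, \tilde{p}) \to \infty$, project by $x^{(k)}_n \mapsto d(x^{(k)}_n, p)$, apply Lemma~\ref{Lem1.6} (each $\tilde{s}_k$ being mutually stable with $\tilde{0}$), and extend to a maximal self-stable family in the sequences of $S_p(X)$.

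The main obstacle in either direction is the asymmetric relationship between mutual stability in $\tilde{X}$ and mutual stability in the sequences of $S_p(X)$: the reverse triangle inequality $|d(x_n,p) - d(y_n,p)| \leq d(x_n, y_n)$ yields only boundedness, not convergence, of one quotient from the other, so neither map between the two settings is automatically compatible with maximal self-stability. Lemma~\ref{Lem1.6} bridges this gap at the modest cost of passing to a subsequence $\tilde{r}'$, which is harmless because the conclusion in both (i) and (ii) is quantified over \emph{all} normalizing sequences and \emph{all} maximal self-stable families; refuting either statement only requires producing one bad choice, and subsequence extraction preserves the relevant limits.
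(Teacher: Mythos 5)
Your proposal is correct, and the direction from an unbounded $\Omega_{0,\tilde r}^{S_p(X)}$ to an unbounded pretangent space of $X$ (lifting $s_n^{(k)}$ to points $x_n^{(k)}$ with $d(x_n^{(k)},p)=s_n^{(k)}$, then applying Lemma~\ref{Lem1.6} and Zorn's lemma) is essentially identical to the paper's proof of (i)~$\Rightarrow$~(ii). Where you diverge is the other direction: you treat it as a ``strictly parallel'' contrapositive, again invoking Lemma~\ref{Lem1.6} and passing to a subsequence $\tilde r'$, whereas the paper proves (ii)~$\Rightarrow$~(i) \emph{directly} and without any subsequence extraction. The paper's key observation there, quoted from Proposition~2.2 of \cite{DAKM}, is that for subsets of $\mathbb R^{+}$ containing $0$, mutual stability of $\tilde a$ with $\tilde 0$ and of $\tilde b$ with $\tilde 0$ already forces mutual stability of $\tilde a$ with $\tilde b$ (since $|d(x_n,p)-d(y_n,p)|/r_n$ converges whenever both quotients do). Hence the projected family $\{\{d(x_n,p)\}_n:\tilde x\in\tilde X_{p,\tilde r}\}$ sits inside a single maximal self-stable family w.r.t.\ the \emph{same} $\tilde r$, and the triangle inequality gives the quantitative bound $\mathrm{diam}\,\Omega_{p,\tilde r}^{X}\le 2\,\mathrm{diam}\,\Omega_{0,\tilde r}^{S_p(X)}$. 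Your remark that ``neither map between the two settings is automatically compatible with maximal self-stability'' is therefore too pessimistic: the projection $X\to S_p(X)$ \emph{is} automatically compatible, and only the lifting $S_p(X)\to X$ genuinely requires Lemma~\ref{Lem1.6}. Your symmetric treatment is still valid (subsequence passage preserves the limits by \eqref{eqv}, so unboundedness survives), and it has the virtue of uniformity and of not needing the cited result from \cite{DAKM}; what it gives up is the explicit diameter comparison and the fact that in one direction the same normalizing sequence can be kept.
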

\begin{proof}
(ii) $\Rightarrow $ (i) Write
$\Omega_{0,\tilde r}^{S_{p}(X)}$ and $\tilde
S_{0, \tilde r} (X)$ for pretangent spaces to $S_{p}(X)$ at $0$ and, respectively, for the corresponding maximal self-stable families. Suppose that the inequality
$$\textrm{diam}\, \Omega_{0,\tilde r}^{S_{p}(X)}< \infty$$ holds for every $\Omega_{0,\tilde r}^{S_{p}(X)}.$ Let $\Omega_{p, \tilde
r}^{X}$ be an arbitrary pretangent space to $X$ with the corresponding  maximal
self-stable family $\tilde X_{p,\tilde r}$. Let $\tilde x, \tilde y \in \tilde X_{p, \tilde r}.$ The membership relations
$\tilde x, \tilde y \in \tilde X_{p, \tilde r}$ imply the existence of the finite limits
$$\lim_{n\to\infty}\frac{d(x_n, p)}{r_n}\qquad \mbox{and} \qquad \lim_{n\to\infty}\frac{d(y_n,p)}{r_n}.$$
As it was shown in (\cite{DAKM}, Proposition 2.2) the statement ``\emph{If $\tilde a$ and $ \tilde 0$ are mutually stable and $\tilde b$ and $\tilde 0$ are mutually stable, then $\tilde a$ and $\tilde b$ are mutually stable}'' holds for every normalizing sequence $\tilde r$ and every subspace $E$ of the metric space $\mathbb R^{+}$ with $0\in E$ and $\tilde a, \tilde b\in\tilde E.$

Consequently we obtain that $\{d(x_n,p)\}_{n\in\mathbb N},$ $\{d(y_n,p)\}_{n\in\mathbb
N}\in\tilde S_{0,\tilde r}(X).$ \linebreak Using the triangle inequality, we obtain $$\tilde
d_{\tilde r}(\tilde x, \tilde
y)=\lim_{n\to\infty}\frac{d(x_n,y_n)}{r_n}\le\lim_{n\to\infty}\frac{d(x_n,p)}{r_n}+\lim_{n\to\infty}\frac{d(y_n,p)}{r_n}
$$
\begin{equation*}
\le 2\sup_{\tilde z\in\tilde S_{0, \tilde r}(X)}\tilde d_{\tilde r}(\tilde 0, \tilde z)\le 2\, \textrm{diam}\,\Omega_{0, \tilde r}^{S_{p}(X)}.
\end{equation*}
Hence
\begin{equation*}
\textrm{diam}\,\Omega_{p, \tilde r}^{X}=\sup_{\tilde x, \tilde y\in\tilde X_{p,\tilde r}}\tilde d_{\tilde r}(\tilde x, \tilde y)\le 2\,\textrm{diam}\Omega_{0, \tilde r}^{S_{p}(X)}.
\end{equation*}
The boundedness of $\Omega_{p,\tilde r}^{X}$ follows.

(i) $\Rightarrow $ (ii) Suppose that all $\Omega_{p, \tilde r}^{X}$ are bounded but
there is an unbounded $\Omega_{0, \tilde r}^{S_{p}(X)}.$ Let $\tilde S_{0,\tilde
r}(X)$ be the maximal self-stable family corresponding to $\Omega_{0, \tilde
r}^{S_{p}(X)}.$ Since $\Omega_{0, \tilde r}^{S_{p}(X)}$ is unbounded we can find
a countable family of the sequences $\{d(p,b_{n}^{j})\}_{n\in\mathbb N}\in \tilde S_{0,
\tilde r}(X),$ $j\in\mathbb N,$ such that
\begin{equation}\label{L3.2}
\infty>\lim_{n\to\infty}\frac{d(p,b_{n}^{j})}{r_n}\ge j
\end{equation}
for every $j\in\mathbb N.$ By Lemma~\ref{Lem1.6} there is a subsequence $\tilde
r'=\{r_{n_k}\}_{k\in\mathbb N}$ such that the family of sequences
$\{b_{n_k}^{j}\}_{k\in\mathbb N,}$ $j\in\mathbb N,$ is self-stable w.r.t. $\tilde r'.$
Applying the Zorn Lemma we find a maximal self-stable family $\tilde X_{p, \tilde r'}$
such that $\{b_{n_k}^{j}\}_{k\in\mathbb N}\in\tilde X_{p,\tilde r'}$ for every $j.$
Inequalities \eqref{L3.2} imply that the pretangent space corresponding to $\tilde
X_{p,\tilde r'}$ is unbounded, contrary to the supposition.
\end{proof}

The next lemma was proved in (\cite{DB}, Corollary 2.4).

\begin{lemma}\label{l2.6} Let $E\subseteq\mathbb R^{+}$ and  let $\tilde\tau=\{\tau_{n}\}_{n\in\mathbb N}\in\tilde E_0^{d}.$ The set $E$ is
$\tilde\tau$-strongly porous if and only if there exists a
sequence $\{(a_n, b_n)\}_{n\in\mathbb N}\in\tilde I_{E}$ such that
$\mathop{\limsup}\limits_{n\to\infty}\frac{a_n}{\tau_n}<\infty$ and
$\tau_n\le a_n$ for sufficiently large $n.$
\end{lemma}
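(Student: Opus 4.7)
My plan is to prove both directions of Lemma~\ref{l2.6} by exploiting two structural ingredients: each component $(a_n, b_n)$ of $Ext\, E$ is disjoint from $E$, while the $\tilde I_E$-condition $\lim_{n\to\infty}(b_n-a_n)/b_n = 1$ is equivalent to $a_n/b_n\to 0$, so that the components are very long compared to their left endpoints.

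For the forward implication, I would unpack Definition~\ref{D2*}: $\tilde\tau$-strong porosity supplies a sequence $\{(a_n, b_n)\}\in\tilde I_E$ together with constants $c_1,c_2 > 0$ satisfying $c_1\tau_n < a_n < c_2\tau_n$ for every $n\in\mathbb N$. The upper bound gives $\limsup_{n\to\infty} a_n/\tau_n \le c_2 < \infty$ at once. For the inequality $\tau_n \le a_n$ at large $n$, I would observe that $\tau_n\in E\setminus\{0\}$ cannot lie in the open interval $(a_n, b_n)$, hence $\tau_n \le a_n$ or $\tau_n \ge b_n$; the second alternative combined with $a_n > c_1\tau_n$ yields $a_n/b_n > c_1$, which is incompatible with $a_n/b_n \to 0$ for all sufficiently large $n$.

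For the reverse implication, starting from $\{(a_n, b_n)\}\in\tilde I_E$ satisfying (a) and (b), I would show that the same sequence witnesses $\tilde\tau \asymp \tilde a$. Pick $M > \limsup_{n\to\infty} a_n/\tau_n$ and $N_0$ so large that both $\tau_n \le a_n$ and $a_n \le M\tau_n$ hold for $n \ge N_0$; for these indices the strict bounds $(1/2)\tau_n < a_n < (M+1)\tau_n$ are automatic. The finitely many remaining indices $n < N_0$ contribute only finitely many positive ratios $a_n/\tau_n$, so their minimum $m_0$ is strictly positive and their maximum $M_0$ is finite. Setting $c_1 := \min\{1/2,\, m_0/2\}$ and $c_2 := \max\{M+1,\, M_0+1\}$, one obtains $c_1\tau_n < a_n < c_2\tau_n$ for every $n\in\mathbb N$, which is precisely $\tilde\tau \asymp \tilde a$.

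I do not anticipate a deep obstacle; both directions are direct manipulations of the definitions. The one point deserving care is bridging the ``for sufficiently large $n$'' phrasing in condition (b) with the ``for all $n$'' phrasing of $\asymp$, which I handle by absorbing the finitely many exceptional indices into the constants $c_1, c_2$. Conceptually, the key geometric observation driving the forward direction is that when $\tilde\tau \asymp \tilde a$ the scales $a_n$ and $\tau_n$ are comparable, while the length $b_n - a_n$ dominates $a_n$, so $\tau_n$ is forced to lie to the \emph{left} of $(a_n, b_n)$ eventually.
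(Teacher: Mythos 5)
Your proof is correct. Note that the paper offers no argument for this lemma at all --- it is imported from the reference [DB, Corollary 2.4] --- so there is nothing in-paper to compare against; your direct verification from the definitions of $\tilde I_E$, $\asymp$ and $\tilde\tau$-strong porosity is exactly what is needed, and the two delicate points (ruling out $\tau_n\ge b_n$ via $a_n/b_n\to 0$ in the forward direction, and absorbing the finitely many indices $n<N_0$ into the constants $c_1,c_2$ in the reverse direction, which is legitimate since every left endpoint $a_n$ is strictly positive because $0$ is an accumulation point of $E$) are both handled properly.
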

\begin{proposition}\label{P2.7}
Let $E\subseteq\mathbb R^{+}$  and let
$\tilde \tau=\{\tau_n\}_{n\in\mathbb N}\in\tilde E_{0}^{d}.$ The following statements
are equivalent.
\begin{enumerate}
\item[\rm(i)]  $E$ is $\tilde\tau$-strongly porous at 0.

\item[\rm(ii)] There is a constant $k\in (1, \infty)$ such that for every $K\in (k, \infty)$ there exists $N_{1}(K)\in \mathbb N$ such that
\begin{equation}\label{inters} (k\tau_n, K\tau_n)\cap E = \varnothing\end{equation} for $n
\ge N_1(K).$
\end{enumerate}
\end{proposition}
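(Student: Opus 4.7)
My plan is to use Lemma~\ref{l2.6} as a bridge, since it reformulates $\tilde\tau$-strong porosity as the existence of $\{(a_n,b_n)\}\in\tilde I_E$ such that $\tau_n\le a_n$ for large $n$ and $\limsup_{n\to\infty}a_n/\tau_n<\infty$. Both implications then reduce to a routine comparison between the intervals $(k\tau_n,K\tau_n)$ and connected components of $\mathrm{Ext}\,E$.

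For (i)$\Rightarrow$(ii) I would take $\{(a_n,b_n)\}$ supplied by Lemma~\ref{l2.6} and choose any $k>\max(1,\limsup_{n\to\infty}a_n/\tau_n)$, so that $a_n<k\tau_n$ eventually. Since $\{(a_n,b_n)\}\in\tilde I_E$ forces $a_n/b_n\to 0$, and $a_n\ge\tau_n$ gives $b_n/\tau_n\ge b_n/a_n\to\infty$, for every $K>k$ one has $K\tau_n<b_n$ from some index $N_1(K)$ on. Hence $(k\tau_n,K\tau_n)\subseteq(a_n,b_n)\subseteq\mathrm{Ext}\,E$ for $n\ge N_1(K)$, which is exactly \eqref{inters}.

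For (ii)$\Rightarrow$(i) I would pick a strictly increasing sequence $K_j\in(k,\infty)$ with $K_j\to\infty$, set $N_j:=N_1(K_j)$ (after passing to a subsequence we may assume $N_1<N_2<\cdots$), and for each $n\ge N_1$ let $j(n):=\max\{j:N_j\le n\}$, so that $j(n)\to\infty$. By hypothesis the open interval $(k\tau_n,K_{j(n)}\tau_n)$ is disjoint from $E$, so it is contained in a unique connected component $(a_n,b_n)$ of $\mathrm{Ext}\,E$. The key observation is that $\tau_n\in E$ and $\tau_n<k\tau_n$, so $\tau_n$ itself cannot belong to $(a_n,b_n)$, which forces $a_n\ge\tau_n$; combined with the trivial bounds $a_n\le k\tau_n$ and $b_n\ge K_{j(n)}\tau_n$ this yields $\tau_n\le a_n\le k\tau_n\to 0$, $\limsup a_n/\tau_n\le k<\infty$, and $a_n/b_n\le k/K_{j(n)}\to 0$. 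Thus $\{(a_n,b_n)\}\in\tilde I_E$ and Lemma~\ref{l2.6} delivers (i).

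The main obstacle is making sure the second direction produces a genuine member of $\tilde I_E$ rather than just a sequence of disjoint-from-$E$ intervals: a single fixed $K$ only gives $a_n/b_n\le k/K$, which does not imply the required $(b_n-a_n)/b_n\to 1$. The diagonal device of letting $K_{j(n)}$ grow with $n$ through the thresholds $N_1(K_j)$ is precisely what repairs this and brings Lemma~\ref{l2.6} into play.
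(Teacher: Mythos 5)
Your proof is correct and follows essentially the same route as the paper: both directions hinge on Lemma~\ref{l2.6} and on comparing $(k\tau_n,K\tau_n)$ with connected components of $\mathrm{Ext}\,E$, with $\tau_n\in E$ forcing $\tau_n\le a_n\le k\tau_n$. The only cosmetic difference is in (ii)$\Rightarrow$(i): the paper fixes $K=2k$ to define the components $(a_n,b_n)$ and then lets $K\to\infty$ to get $b_n/a_n\to\infty$ (verifying Definition~\ref{D2*} directly), whereas you build the same components via the diagonal sequence $K_{j(n)}$ and route the conclusion back through Lemma~\ref{l2.6}; both devices do the same work.
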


\begin{proof}(i) $\Rightarrow $ (ii)
Suppose that $E$ is $\tilde\tau$-strongly porous at 0. By Lemma~\ref{l2.6} there
is a sequence
\begin{equation}\label{int}
\{(a_n, b_n)\}_{n\in\mathbb N}\in\tilde I_{E}
\end{equation} such that $\mathop{\limsup}\limits_{n\to\infty}\frac{a_n}{\tau_n}<\infty$
and $\tau_n \le a_n$ for sufficiently large $n.$ Write
$$k=1+\mathop{\limsup}\limits_{n\to\infty}\frac{a_n}{\tau_n},$$ then $k\ge 2$ and there is
$N_0 \in \mathbb N$ such that
\begin{equation}\label{L3*}
\tau_n\le a_n< k\tau_n
\end{equation} for $n\ge N_0.$
Let $K\in (k, \infty).$ Membership \eqref{int} implies the equality
$\mathop{\lim}\limits_{n\to\infty}\frac{b_n}{a_n}=\infty.$ The last equality and
\eqref{L3*} show that there is $N_1\ge N_0$ such that  $$a_n< k\tau_n < K\tau_n \le
b_n$$ if $n\ge N_1.$ Hence the inclusion \begin{equation}\label{L3**} (k\tau_n, K
\tau_n)\subseteq(a_n,b_n) \end{equation} holds for $n\ge N_1.$ Since
\begin{equation}\label{L3***} E\cap(a_n, b_n)=\varnothing,\end{equation}\eqref{L3**} and
\eqref{L3***} imply \eqref{inters}. Thus (ii) follows from (i).

(ii) $\Rightarrow $ (i) Assume that statement $\textrm{(ii)}$ holds. Then for $K=2k$ there is $N_0\in\mathbb N$
such that $$(k\tau_n, 2k\tau_n)\cap E=\varnothing$$ for $n\ge N_0.$ Consequently, for
every $n\ge N_0,$ we can find a connected component $(a_n, b_n)$ of $Ext E$ meeting the
inclusion \begin{equation*}\label{inql} (k\tau_n, 2k\tau_n)\subseteq (a_n, b_n).
\end{equation*}
Define $(a_n, b_n):=(a_{N_0}, b_{N_0})$ for $n< N_0.$ Since, for $n\ge N_0,$ we have
$$\tau_n\in E,\, \tau_n < k\tau_n \, \, \mbox{and}\,\,(a_n, k\tau_n)\cap E=\varnothing,$$
the double inequality $\tau_n \le a_n< k\tau_n$ holds. Hence $\{\tau_n\}_{n\in\mathbb N}\asymp\{a_n\}_{n\in\mathbb N},$ i.e., to prove
$\textrm{(i)}$ it is sufficient to show that
 \begin{equation}\label{eq2.12}
 \{(a_n, b_n)\}_{n\in\mathbb N}\in \tilde
I_E.
\end{equation}
 All $(a_n,b_n)$ are connected components of $Ext E,$ so that \eqref{eq2.12}
holds if and only if
\begin{equation}\label{infty} \lim_{n\to\infty}\frac{b_n}{a_n}=\infty.
\end{equation} Let $K$ be an arbitrary point of $(k, \infty).$ Applying \eqref{inters} we can find $N_1(K)\in\mathbb N$ such that $$(k\tau_n, K\tau_n)\subseteq (a_n, b_n)$$ for $n\ge N_1 (K).$
Consequently, for such $n,$ we have
$$\frac{b_n}{a_n}\ge\frac{K\tau_n}{k\tau_n}=\frac{K}{k}.$$ Letting $K\to\infty$ we see that \eqref{infty} follows.
\end{proof}

 \noindent \emph{Proof of Theorem~\ref{Th2.4}.}
 The theorem is trivial if $p$ is an isolated point of $X,$ so that suppose $p$ is an accumulation point of $X.$
Taking into account Proposition~\ref{P2.5}, we can also assume that $X\subseteq\mathbb
R^{+}$ and $p=0.$

Let $X$ be $w$-strongly porous at $0$ and let $\Omega_{0, \tilde r}^{X}$ be an
arbitrary pretangent space to $X$ at $0$ with the corresponding maximal self-stable family $\tilde X_{0,\tilde r}.$  To prove that $\Omega_{0,\tilde r}^{X}$ is bounded it suffices to show that
\begin{equation}\label{eq5.3}
\sup_{\substack{\beta\in\Omega_{0, \tilde r}^{X} \\ \beta\ne\alpha}}\rho(\alpha, \beta)<\infty
\end{equation}
where $\alpha=\pi(\tilde 0)$ (see \eqref{eq1.5}). This inequality is vacuously true if $\Omega_{0, \tilde r}^{X}$ is one-point. In the case of card$\Omega_{0, \tilde r}^{X}\ge 2$ we can find  $c\in(0, \infty)$ and $\tilde \tau=\{\tau_n\}_{n\in\mathbb
N}\in\tilde X_{0,\tilde r}$ such that
\begin{equation}\label{eq2.15}\rho(\pi(\tilde\tau), \alpha)=\lim_{n\to\infty}\frac{\tau_n}{r_{n}}=c.\end{equation}  Let $\mathbb N^{i}$ be the set of all infinite subsets of $\mathbb N.$ Since $X$ is $w$-strongly porous at 0, there is $A\in\mathbb N^{i}$ such that $\tilde\tau'=\{\tau_n\}_{n\in A}$ is almost decreasing and $X$ is $\tilde\tau'$-strongly porous at 0. Note that the equivalence $\tilde x \asymp \tilde\tau$ holds for $\tilde x\in\tilde X_{0,\tilde r}$ if and only if $\pi(\tilde x)\ne\alpha.$ Using \eqref{eq2.15} we can write \eqref{eq5.3} in the equivalent form

\begin{equation}\label{eq2.16*}
\sup_{\substack{\tilde x\in\tilde X_{0,\tilde r} \\ \tilde x\asymp \tilde\tau}}\inf_{\substack{B\subseteq A \\ B\in\mathbb N^{i}}}\limsup_{\substack{n\to\infty \\ n\in B}}\frac{x_n}{\tau_n}<\infty.
\end{equation}
Let $\tilde x=\{x_n\}_{n\in\mathbb N}$ be an arbitrary element of $\tilde X_{0, \tilde r}$ for which $\pi(\tilde x)\ne\alpha.$ Then $\tilde x\asymp \tilde \tau$ holds. Moreover it is easy to find $B \subseteq A,$  $B\in\mathbb N^{i},$ such that $\{x_n\}_{n\in B}$ is almost decreasing. Since $X$ is $\tilde\tau'$-strongly porous at 0 and $\tilde x\asymp \tilde \tau,$ the set $X$ is also  $\{x_n\}_{n\in B}$-strongly porous at 0. Let $\{(a_n, b_n)\}_{n\in B}\in\tilde I_{X}$ be a sequence such that $\{\tau_{n}\}_{n\in B}\asymp\{a_n\}_{n\in B}\asymp\{x_{n}\}_{n\in B}.$
Lemma~\ref{l2.6}
implies that $$x_{n}\le a_n$$ for sufficiently large $n\in B$ and that
$$\limsup_{\substack{n\to\infty \\ n\in A}}\frac{a_n}{\tau_n}<\infty.$$ Thus
$$
\limsup_{\substack{n\to\infty \\ n\in B}}\frac{x_{n}}{\tau_n}\le\limsup_{\substack{n\to\infty \\ n\in B}}\frac{a_n}{\tau_n}\le\limsup_{\substack{n\to\infty \\ n\in A}}\frac{a_n}{\tau_n} <\infty.$$
Inequality~\ref{eq2.16*} follows.

Suppose now that all pretangent spaces to $X$ at 0 are bounded but the set $X$ is not
$w$-strongly porous at 0. By Definition~\ref{D2.3} there exists a decreasing sequence
$\tilde\tau=\{\tau_n\}_{n\in\mathbb N}$ such that $\tau_{n}\in X\setminus\{0\}$ for every $n\in\mathbb N,$ $\mathop{\lim}\limits_{n\to\infty}\tau_{n}=0$ and  $X$ is not
$\tilde\tau'$-strongly porous at 0 for every subsequence $\tilde\tau'$ of the sequence
$\tilde \tau.$ Since $X$ is not $\tilde\tau$-strongly porous, by Proposition~\ref{P2.7}
for every $k_{1}>1$ there is $K_1 \in (k_1, \infty)$ such that $(k_1 \tau_n, K_1 \tau_n)\cap
E\ne\varnothing$ for all $n$ belonging to an infinite set $A_{1}\subseteq\mathbb N.$
Using this fact we can find a convergent subsequence $$
\frac{\tilde x^{(1)}}{\tilde
\tau^{(1)}}:=\left\{\frac{x_{i}^{(1)}}{\tau_{i}}\right\}_{i\in A_{1}} \quad \mbox{such
that} \quad k_1 <\frac{x_{i}^{(1)}}{\tau_{i}}< K_{1}, \quad i\in A_{1},$$ $\tilde
\tau^{(1)}:=\{\tau_{i}\}_{i\in A_{1}},$ $\tilde x^{(1)}:=\{x_{i}^{(1)}\}_{i\in
A_{1}}, x_{i}^{(1)}\in X\setminus\{0\}.$ Let $k_2 > K_1 \vee 2.$ Since $X$ is not $\tilde\tau'$-strongly
porous, there are $K_2 > k_2$ and an infinite $A_2\subseteq A_1$ such that $(k_2 \tau_n,
K_2 \tau_n)\cap E\ne\varnothing$ for $n\in\mathbb N,$ so that we can construct a
convergent subsequence $$ \frac{\tilde x^{(2)}}{\tilde
\tau^{(2)}}:=\left\{\frac{x_{i}^{(2)}}{\tau_{i}}\right\}_{i\in A_{2}} \mbox{such
that} \quad k_2 <\frac{x_{i}^{(2)}}{\tau_{i}}< K_{2}, \quad x_{i}^{(2)}\in X\setminus\{0\},\, i\in A_{2}$$ where $\tilde
\tau^{(2)}=\{\tau_{i}^{(2)}\}_{i\in A_{2}}$ is a subsequence of $\tilde\tau^{(1)}$ and
$\tilde x^{(2)}:=\{x_{i}^{(2)}\}_{i\in A_{2}}.$ Repeating this procedure
we see that, for every $j\in\mathbb N,$ there are some sequences $\tilde
x^{(j+1)}=\{x_{i}^{(j+1)}\}_{i\in A_{j+1}}, x_{i}^{j+1}\in X\setminus\{0\}$ and $\tilde
\tau^{(j+1)}=\{\tau_{i}\}_{i\in A_{j+1}},$ $A_{j+1}$ is infinite subset of $A_{j}\subseteq\mathbb N,$ such that $$k_{j}\vee
k_{j+1}<\frac{x_{i}^{(j+1)}}{\tau_{i}}< K_{j+1}, $$ for $i\in A_{j+1}$ and
$$\frac{\tilde x^{(j+1)}}{\tilde\tau^{(i+1)}}:=\left \{\frac{x_{i}^{(j+1)}}{\tau_{i}} \right\}_{i\in
A_{j+1}}$$ is convergent. To complete the proof, it suffices to make use of Cantor's diagonal argument.

Let
$B:=\{n_1, ..., n_{j},...\}$ be an infinite subset of $\mathbb N$ such that $n_{j}\in
A_{j}$ for every $j\in\mathbb N.$  Let us define the subsequences $\tilde
y^{(j)}=\{y_{k}^{j}\}_{k\in B}$ by the rule
\begin{equation*}
y_{k}^{(j)}:=\begin{cases}
       0 & \mbox{if} \quad k \in A_{j}\setminus B\\
       x_{k}^{(j)}& \mbox{if} \quad k\in A\cap A_{j}. \\
         \end{cases}
\end{equation*}
Then the sequence $\left \{\frac{y_{k}^{(j)}}{\tau_{k}} \right\}_{k\in B}$ is convergent
and
\begin{equation}\label{eq5.6}
j\le\lim_{\substack{k\to\infty \\ k\in B}}\frac{y_{k}^{(j)}}{\tau_{k}}
\end{equation} for every $j\in\mathbb N.$ Since all $\{y_{k}^{(j)}\}_{k\in B}$ are
mutually stable w.r.t. the norma\-lizing sequence
$\tilde\tau'=\{\tau_k\}_{k\in B},$ there is a maximal self-stable $\tilde X_{0, \tilde
r'}$ such that $\{y_{k}^{(j)}\}_{k\in B}\in\tilde X_{0, \tilde r'}$ for $j\in\mathbb N.$
Inequality \eqref{eq5.6} shows that the corresponding pretangent space $\Omega_{0,
\tilde r'}^{X}$ is unbounded, contrary to the supposition. $\qquad\quad\,\square$

\bigskip

\begin{corollary}
Let $(X,d,p)$ be a pointed metric space. If all pretangent spaces $\Omega_{p, \tilde r}^{X}$ are bounded, then at least one from these pretangent spaces is tangent.
\end{corollary}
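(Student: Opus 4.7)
The plan is to chain together three facts that have already been established (or cited) in the paper, so that the corollary reduces to a short syllogism. No new construction should be needed; the work is just to trace the correct implications.

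First I would invoke Theorem~\ref{Th2.4}: the hypothesis that every pretangent space $\Omega_{p,\tilde r}^{X}$ is bounded is equivalent to $S_{p}(X)$ being $w$-strongly porous at $0$. Next, Remark~\ref{r2.4} observes that $w$-strong porosity of $S_{p}(X)$ at $0$ entails ordinary strong right porosity of $S_{p}(X)$ at $0$ in the sense of Definition~\ref{D1}. Finally, I would apply the result cited earlier from \cite{DAK}, namely: \emph{a bounded tangent space to $X$ at $p$ exists if and only if the distance set $S_{p}(X)$ is strongly porous on the right at $0$}. This yields the existence of at least one tangent pretangent space $\Omega_{p,\tilde r}^{X}$, which is exactly the claim of the corollary (the tangent space produced is, by Definition~\ref{D1.4}, a pretangent space that happens to be tangent, so it lies among the pretangent spaces referred to in the statement).

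So the order of steps is: (i) Theorem~\ref{Th2.4} $\Rightarrow$ $w$-strong porosity of $S_{p}(X)$; (ii) Remark~\ref{r2.4} $\Rightarrow$ strong right porosity of $S_{p}(X)$; (iii) the \cite{DAK} criterion $\Rightarrow$ existence of a bounded tangent space; (iv) note that this tangent space is necessarily one of the bounded pretangent spaces guaranteed by the hypothesis, since tangency is defined as a property \emph{of} a pretangent space.

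There is no real obstacle, because all of the analytic content has been packaged into Theorem~\ref{Th2.4} and the quoted \cite{DAK} theorem. The only thing one has to be slightly careful about is the logical direction in Remark~\ref{r2.4} (one needs the implication ``$w$-strongly porous $\Rightarrow$ strongly porous'', not its converse, which is false in general), and the fact that the \cite{DAK} result only asserts the existence of a tangent space; but under our hypothesis \emph{every} pretangent space is bounded, so the boundedness side of the \cite{DAK} statement is automatic and we really only need the ``tangent exists'' conclusion.
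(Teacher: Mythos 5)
Your proposal is correct and follows exactly the same chain as the paper's own proof: Theorem~\ref{Th2.4} gives $w$-strong porosity of $S_p(X)$ at $0$, Remark~\ref{r2.4} downgrades this to strong right porosity, and the cited criterion from \cite{DAK} then yields a (bounded) tangent space. No differences worth noting.
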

\begin{proof}
Suppose that all $\Omega_{p, \tilde r}^{X}$ are bounded, then, by Theorem~\ref{Th2.4}, the set $S_{p}(X)$ is $w$-strongly porous at 0. Consequently $S_{p}(X)$ is strongly porous on the right at 0 (see Remark~\ref{r2.4}). As was noted above, $S_{p}(X)$ is strongly porous on the right at 0 if and only if there is a bounded tangent space $\Omega_{p, \tilde r}^{X}.$
\end{proof}
We shall say that a set $E\subseteq\mathbb{R}^{+}$ is \emph{completely strongly porous} at 0 if $E$ is $\tilde\tau$-strongly porous at 0 for every $\tilde\tau\in \tilde E_{0}^{d}.$ Some properties of completely strongly porous sets $E\subseteq\mathbb R^{+}$ are described in \cite{DB}.

The following example shows that there exist $w$-strongly porous at 0 subsets of $\mathbb
R^{+}$ which are not completely strongly porous at 0.
\begin{example}\label{ex2.8}
Let $\tau_1 = 1$ and $\tau_{n+1}=2^{-n^{2}}\tau_n$ for every $n\in\mathbb N.$
Let $\mathbb N_{1}, \mathbb N_{2}, ..., \mathbb N_{k},...$ be an infinite partition of $\mathbb N,$ $$\bigcup_{k=1}^{\infty}\mathbb N_{k}=\mathbb N, \quad \mathbb N_{i}\cap\mathbb N_{j}=\varnothing \quad \mbox{for}\quad i\ne j,$$
such that $$\nu(1)<\nu(2)<...<\nu(k)<...$$ where $$\nu(k):=\min_{n\in\mathbb N_{k}}n.$$
For every $n\in\mathbb N$ define $\tau_{n}^{*}$ as $2^{-\nu(m(n))}\tau_{n}$ where $m(n)$ is the index for which $n\in\mathbb N_{m(n)}.$ Write $E_{1}:=\{\tau_n: n\in\mathbb N\},$ $E_{1}^{*}:=\{\tau_{n}^{*}: n\in\mathbb N\}$ and $$E:=E_{1}\cup E_{1}^{*}\cup\{0\},$$
here $E_{1}$ and $E_{1}^{*}$ are the ranges of the sequences $\{\tau_n\}_{n\in\mathbb N}$ and $\{\tau_{n}^{*}\}_{n\in\mathbb N}$ respectively. Using Lemma~\ref{l2.6} we can show that $E$ is not $\tilde\tau$-strongly porous with $\tilde\tau=\{\tau_n\}_{n\in\mathbb N}$ define as above, so that $E$ is not completely strongly porous.

Let us show that $E$ is $w$-strongly porous at 0. Note that, for every $\tilde y=\{y_n\}_{n\in\mathbb N}\in \tilde E_{0}^{d},$
there are three possibilities:
\newline (i) $y_n \in E_{1}$ holds for an infinite number of subscripts $n;$
\newline (ii) there is $k\in\mathbb N$ such that $$\textrm{card}(\{y_n: n\in\mathbb N\}\cap\{\tau_{n}^{*}: n\in\mathbb N_{k}\})=\infty;$$
\newline (iii) there is an infinite strictly increasing sequence $\{k_i\}_{i\in\mathbb N}$ such that $$\{y_n: n\in\mathbb N\}\cap\{\tau_{n}^{*}: n\in\mathbb N_{k_i}\}\ne\varnothing.$$

It follows directly from the definitions that
\begin{equation}\label{eqv2.13}
n\ge\nu(k)\ge k
\end{equation}
for every $k\in\mathbb N$ and every $n\in\mathbb N_k.$ This double inequality implies that $n\ge\nu(m(n)).$ Using the last inequality and definitions of $\tau_n$ and $\tau_{n}^{*}$ we obtain \begin{equation}\label{eqv2.14}
\tau_{n+1}=2^{-n^{2}}\tau_{n}\le 2^{-n}\tau_{n}\le 2^{-\nu(m(n))}\tau_{n}=\tau_{n}^{*}<\tau_{n}.
\end{equation}
In particular, \eqref{eqv2.14} implies that $\tau_{n}^{*}=\tau_{n}$ is possible only for $n=1$ and that $$\tau_{n+1}^{*}<\tau_{n+1}<\tau_{n}^{*}<\tau_{n}$$ holds for every $n\ge 2.$ Moreover we obtain from \eqref{eqv2.14} that
\begin{equation}\label{eqv2.15}
\lim_{n\to\infty}\frac{\tau_{n+1}}{\tau_{n}^{*}}\le\lim_{n\to\infty}\frac{2^{-n^{2}}}{2^{-n}}=0.
\end{equation}
Consequently $\{(\tau_{n+1}, \tau_{n}^{*})\}_{n\in\mathbb N}\in\tilde I_{E}.$ Using the last membership we can show that for $\tilde y$ satisfying (i), there is $\tilde y'$ such that $E$ is $\tilde y'$-strongly porous at 0. If $\tilde y$ meets condition (ii), then using \eqref{eqv2.15} and the equality $\tau_{n}^{*}=2^{-\nu(k)}\tau_{n}, \, n\in\mathbb N_{k},$ we can also find the desired $\tilde y'.$ Finally, if (iii) holds, then $\tilde y'$ can be constructed with the use of the relation $$\lim_{k\to\infty}2^{-\nu(k)}=0,$$ wich follows from the second inequality in \eqref{eqv2.13}. We leave the details of the constructions of $\tilde y'$ to the reader.
\end{example}
\begin{remark}\label{r2.11}
Considering $E$ from Example~\ref{ex2.8} as a pointed metric space with a marked point 0 we can show that the inequality card$(\Omega_{0,\tilde r}^{E})\le 3$ holds for every $\Omega_{0,\tilde r}^{E}.$  On the other hand, if $(X,d,p)$ is a pointed metric space such that card$(\Omega_{p,\tilde r}^{E})\le 2$ holds for every $\Omega_{p,\tilde r}^{E},$ then the distance set $\{d(x,p): x\in X\}$ is completely strongly porous at 0.
\end{remark}


\medskip

\textbf{Viktoriia Bilet}

Institute of Applied Mathematics and Mechanics of NASU, R. Luxemburg str. 74, Donetsk 83114, Ukraine

\textbf{E-mail:} biletvictoriya@mail.ru

\bigskip

\textbf{Oleksiy Dovgoshey}

Institute of Applied Mathematics and Mechanics of NASU, R. Luxemburg str. 74, Donetsk 83114, Ukraine

\textbf{E-mail:} aleksdov@mail.ru
\end{document}